\numberwithin{figure}{section}
\theoremstyle{plain}
\newtheorem{thm}{Theorem}[section]
\newtheorem{lem}[thm]{Lemma}
\theoremstyle{definition}
\newtheorem{defn}{Definition}[section]
\theoremstyle{remark}
\newtheorem*{rem}{Remark}
\title[On gradient Einstein solitons]{Some characterizations of $\rho$-Einstein solitons}
\author[A. A. Shaikh, Antonio W. Cunha and P. Mandal]{Absos Ali Shaikh$^{*1}$, Antonio W. Cunha$^{2}$ and Prosenjit Mandal$^{3}$}
\address{$^1$Department of Mathematics,\newline University of
Burdwan, Golapbag,\newline Burdwan-713104,\newline West Bengal, India.}
\address{$^2$Departamento de Matem\'{a}tica,\newline Universidade Federal do Piau\'{\i}, \newline Teresina 64.049-550,\newline Piaui, Brazil.}
\address{$^3$Department of Mathematics,\newline University of
Burdwan, Golapbag,\newline Burdwan-713104,\newline West Bengal, India.}
\email{$^1$aask2003@yahoo.co.in, aashaikh@math.buruniv.ac.in}
\email{$^2$wilsoncunha@ufpi.edu.br}
\email{$^3$prosenjitmandal235@gmail.com}
\begin{document}
\begin{abstract}
In this article we have showed that a gradient $\rho$-Einstein soliton with a vector field of bounded norm and satisfying some other conditions is isometric to the Euclidean sphere. Later, we have proved that a non-trivial complete gradient $\rho$-Einstein soliton with finite weighted Dirichlet integral and certain restriction on Ricci curvature must be of constant scalar curvature and steady Ricci flat. Finally, we have proved that a non-shrinking or non-expanding gradient traceless Ricci soliton possessing some conditions must be steady.  
\end{abstract}
\noindent\footnotetext{
$\mathbf{2020}$\hspace{5pt}Mathematics\; Subject\; Classification: 53C20; 53C21; 53C25.\\ 
{Key words and phrases: $\rho$-Einstein solitons; scalar curvature; harmonic function; traceless Ricci solitons; Riemannian manifolds; parabolicity.}} 
\maketitle

\section*{Introduction and preliminaries}
The Ricci flow introduced by Hamilton has been studied intensively in recent years and plays a key role in Perelman's proof of the Poincar\'e conjecture. It was studied in the sense to find critical points of scalar curvature functional
$$g\longmapsto E(g)=\int_MRdv_g,$$
where $M$ is a $n(\geq3)$- dimensional compact smooth manifold, $g$ is a Riemannian metric on $M$ and $R$ is its scalar curvature. The existence of such metrics, even for short times, was one of the main reasons which led Hamilton to introduce the Ricci flow 
$$\frac{\partial g}{\partial t} =-2Ric.$$ 
For an study to Ricci flow, we refer the reader to \cite{Chow-B}.

An important aspect in the treatment of the Ricci flow is the study of Ricci solitons. A Ricci soliton is a $n(\geq2)$-dimensional Riemannian manifold $(M, g)$, endowed with a smooth vector field $X$ satisfying
 \begin{equation}\label{Ricci soliton}
 {\rm Ric} +\frac{1}{2}\mathcal{L}_Xg = \lambda g,
 \end{equation}
where $\lambda$ is a constant and $\mathcal{L}$ stands for the Lie derivative. Ricci solitons model the formation of singularities in the Ricci flow and they correspond to self-similar solutions, i.e., they are stationary points of this flow in the space of metrics modulo diffeomorphisms and scalings, see \cite{hamilton} for more details. Thus, classification of Ricci solitons or understanding their geometry is definitely an important issue.

If $X$ is the gradient of a smooth function $f$ on $M$, then such a Ricci soliton is called gradient Ricci soliton. In this case, (\ref{Ricci soliton}) becomes
\begin{equation}\label{Grad Ricci soliton}
 {\rm Ric} + {\rm Hess}(f) = \lambda g,
 \end{equation}
where $Hess(f)$ stands for the Hessian of $f$. Also, a Ricci soliton $(M, g, X, \lambda)$ is called \textit{expanding}, \textit{steady} or \textit{shrinking} according as $\lambda < 0$, $\lambda = 0$ or $\lambda > 0$, respectively. After rescaling the metric $g$ we may assume that $\lambda \in \{-\frac{1}{2}, 0,\frac{1}{2}\}$. Moreover, when the vector field $X$ is \textit{trivial} or $f$ is constant then the Ricci soliton is called trivial. Therefore, Ricci solitons are natural extensions of Einstein manifolds. A gradient soliton is {\em rigid} if it is isometric to a quotient of $N\times\mathbb{R}^k$, where $N$ is an Einstein manifold and $f=\frac{\lambda}{2}|x|^2$ on the Euclidean factor.
In general, it is natural to consider geometric flows of the following type on a $n(\geq3)$- dimensional Riemannian manifold $(M,g)$:
$$\frac{\partial g}{\partial t}=-2({\rm Ric}-\rho Rg),$$
for some $\rho\in\mathbb{R}\diagdown\{0\}$. The parabolic theory, for these flows, was developed by Catino et. al. \cite{Catino13}, which was first considered by Bourguignon \cite{Bour}. They called such a flow as Ricci-Bourguignon flows. It is interesting to prove short time existence for every $-\infty<\rho<\frac{1}{2(n-1)}$. 
Associated to these flows, they defined the following notion of
 $\rho$-Einstein solitons.
 
 \begin{defn}
Let $(M,g)$ be a Riemannian manifold of dimension $n(\geq 3)$, and let $\rho\in\mathbb{R}$, $\rho\neq 0$. Then $M$ is called a $\rho$-Einstein soliton if there is a smooth vector field $X$ such that

\begin{equation}\label{a2}
{\rm Ric}+\frac{1}{2}\mathcal{L}_Xg-\rho Rg=\lambda g,
\end{equation}
where $Ric$ is the Ricci curvature tensor, $\lambda$ is a constant and  $\mathcal{L}_Xg$ represents the Lie derivative of g in the direction of the vector field $X$. Throughout the paper such
a $\rho$-Einstein soliton will be denoted by $(M, g, X, \rho)$.
\end{defn}
If there exists a smooth function $f:M\rightarrow\mathbb{R}$ such that $X=\nabla f$ then the $\rho$-Einstein soliton is called a gradient $\rho$-Einstein soliton, denoted by $(M,g,f,\rho)$ and in this case (\ref{a2}) takes the form 
 
\begin{equation}\label{ro soliton}
{\rm Ric}+{\rm Hess}(f)-\rho Rg=\lambda g.
\end{equation}

As usual, a $\rho$-Einstein soliton is called steady for $\lambda=0$, shrinking for $\lambda>0$ and expanding for $\lambda<0$. The function $f$ is called a $\rho$-Einstein potential of the gradient $\rho$-Einstein soliton.

For special values of the parameter $\rho$, a $\rho$-Einstein soliton is called 
\begin{enumerate}
\item[i)] gradient Einstein soliton if $\rho=\frac{1}{2}$,
\item[ii)] gradient traceless Ricci soliton if $\rho=\frac{1}{n}$,
\item[iii)] gradient Schouten soliton if $\rho=\frac{1}{2(n-1)}$.
\end{enumerate}
Later, this notion has been generalized in various directions such as $m$-quasi Einstein manifold \cite{HLX2015}, $(m,\rho)$-quasi Einstein manifold \cite{HW2013}, Ricci-Bourguignon almost
soliton \cite{DW2018} etc.

 Also they proved that every compact gradient Einstein, Schouten or traceless Ricci soliton is trivial. Furthermore they classified three-dimensional gradient shrinking Schouten soliton and proved that it is isometric to a finite quotient of either $\mathbb{S}^3$ or $\mathbb{R}^3$ or $\mathbb{R}\times\mathbb{S}^2$. In \cite{Huang} Huang deduced a sufficient condition for a compact gradient shrinking $\rho$-Einstein soliton to be isometric to a quotient of the round sphere $\mathbb{S}^n$. Recently in 2019, Mondal and Shaikh \cite{Mondal} showed that a compact gradient $\rho$-Einstein soliton with $\nabla f$ a non-trivial conformal vector field, is isometric to the Euclidean sphere $\mathbb{S}^n$. Further Dwivedi \cite{DW2018} proved the following isometry theorem for gradient Ricci-Bourguignon soliton.
 
 \begin{thm}{\cite{DW2018}}\label{thmDwi}
 A non-trivial compact gradient Ricci-Bourguignon soliton is isometric to an Euclidean sphere if any one of the following holds:
 \begin{enumerate}
 \item[(i)] $M$ has constant scalar curvature,
 \item[(ii)] $\int_M\langle\nabla R, \nabla f\rangle\leq0,$
 \item[(iii)] $M$ is a homogeneous manifold.
 \end{enumerate}
 \end{thm}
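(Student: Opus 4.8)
The plan is to show that each of the conditions (i)--(iii) forces $(M,g)$ to be Einstein, and then to conclude by Obata's rigidity theorem. The starting point is to isolate the trace-free part of the soliton equation~(\ref{ro soliton}): since the terms $\rho R g$ and $\lambda g$ are pure trace, subtracting the trace of $\mathrm{Ric}+\mathrm{Hess}(f)=(\lambda+\rho R)g$ yields
\begin{equation*}
\mathrm{Ric}-\tfrac{R}{n}\,g \;=\; -\Big(\mathrm{Hess}(f)-\tfrac{\Delta f}{n}\,g\Big),
\end{equation*}
i.e. the trace-free Ricci tensor $\mathring{\mathrm{Ric}}$ equals minus the trace-free Hessian of $f$. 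In particular $(M,g)$ is Einstein if and only if $\mathrm{Hess}(f)=\tfrac{\Delta f}{n}g$.

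The core of the argument is the integral identity, which holds on any closed gradient Ricci--Bourguignon soliton:
\begin{equation*}
\int_M \norm{\mathring{\mathrm{Ric}}}^{2}\, dv_g \;=\; \frac{n-2}{2n}\int_M \langle \nabla R,\nabla f\rangle\, dv_g .
\end{equation*}
I would obtain it by using that $\mathring{\mathrm{Ric}}$ is trace-free to write $\norm{\mathring{\mathrm{Ric}}}^{2}=-\langle\mathring{\mathrm{Ric}},\mathrm{Hess}(f)\rangle$ from the relation above, integrating by parts over the closed manifold, and then applying the contracted second Bianchi identity $\operatorname{div}\mathrm{Ric}=\tfrac12\nabla R$, which gives $\operatorname{div}\mathring{\mathrm{Ric}}=\tfrac{n-2}{2n}\nabla R$. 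As $n\geq 3$, the factor $\tfrac{n-2}{2n}$ is strictly positive, so the three hypotheses collapse to the same conclusion: under (i), $R$ constant makes the right-hand side vanish; under (iii), a homogeneous Riemannian manifold has constant scalar curvature, so again the right-hand side vanishes; under (ii) the right-hand side is nonpositive by assumption. In each case $\int_M\norm{\mathring{\mathrm{Ric}}}^{2}\,dv_g\le 0$, hence $\mathring{\mathrm{Ric}}\equiv 0$; thus $(M,g)$ is Einstein, by Schur's lemma (since $n\geq 3$) its scalar curvature $R$ is a constant, and the non-constant potential satisfies $\mathrm{Hess}(f)=\tfrac{\Delta f}{n}g$.

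It remains to identify the metric. Setting $u=\Delta f/n$, so $\mathrm{Hess}(f)=u\,g$, taking divergence and using $\mathrm{Ric}=\tfrac{R}{n}g$ gives $(n-1)\nabla u=-\tfrac{R}{n}\nabla f$, whence $u=-\tfrac{R}{n(n-1)}f+c_0$ for a constant $c_0$ and
\begin{equation*}
\mathrm{Hess}(f)=\Big(-\tfrac{R}{n(n-1)}\,f+c_0\Big)g .
\end{equation*}
Now the sign of $R$ is pinned down. If $R=0$, integrating $\Delta f=n c_0$ over the closed manifold forces $c_0=0$, so $\mathrm{Hess}(f)=0$ and $f$ is constant; if $R<0$, evaluating the displayed equation at an interior maximum and at an interior minimum of $f$, where $\mathrm{Hess}(f)$ is respectively negative and positive semidefinite, and using $-\tfrac{R}{n(n-1)}>0$, forces $\max f=\min f$, so again $f$ is constant. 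Both alternatives contradict non-triviality, hence $R>0$. Then $\tilde f:=f-\tfrac{n(n-1)c_0}{R}$ is non-constant and satisfies $\mathrm{Hess}(\tilde f)=-\tfrac{R}{n(n-1)}\,\tilde f\,g$ with positive coefficient, so Obata's theorem gives that $(M,g)$ is isometric to the round sphere $\mathbb{S}^{n}\big(\sqrt{n(n-1)/R}\,\big)$.

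The argument is short, so there is no single hard obstacle; the points needing care are the last step, where $R\le 0$ must be excluded before Obata's theorem can be applied and where the non-triviality hypothesis is finally used, and the verification that the hypotheses (i) and (iii) genuinely reduce to the integral inequality in (ii). Everything else is the trace decomposition of the soliton equation together with one Bochner-type integration by parts.
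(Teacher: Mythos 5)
The paper itself offers no proof of this statement; it is quoted verbatim from Dwivedi \cite{DW2018}. Your argument is correct and follows essentially the same route as that source: the trace-free decomposition $\mathring{\mathrm{Ric}}=-\mathring{\mathrm{Hess}}(f)$, the integral identity $\int_M|\mathring{\mathrm{Ric}}|^2=\frac{n-2}{2n}\int_M\langle\nabla R,\nabla f\rangle$ obtained from the contracted second Bianchi identity, and an Obata-type conclusion after ruling out $R\le 0$; all steps, including the sign analysis at the extrema of $f$ and the use of non-triviality, check out.
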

  Recently as a consequence of Theorem \ref{thmDwi} Shaikh et al. \cite{Absos} proved the following.
 \begin{thm}{\cite{Absos}}\label{Absos}
 A non-trivial compact gradient $\rho$-Einstein soliton $(M,g,f,\rho)$ has constant scalar curvature and therefore $M$ is isometric to the Euclidean sphere.
 \end{thm}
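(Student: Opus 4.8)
The plan is to reduce the statement to Theorem \ref{thmDwi}: it suffices to verify that a non-trivial compact gradient $\rho$-Einstein soliton satisfies one of the three hypotheses there, and the most economical route is the integral condition (ii) on $\int_M\langle\nabla R,\nabla f\rangle$. First I would record the two structural identities attached to \eqref{ro soliton}. Taking the trace gives
\begin{equation*}
\Delta f=n\lambda-(1-n\rho)R .
\end{equation*}
Taking the divergence of \eqref{ro soliton}, using the contracted second Bianchi identity $\operatorname{div}(\operatorname{Ric})=\tfrac12\nabla R$ together with the commutation formula $\operatorname{div}(\operatorname{Hess}f)=\nabla\Delta f+\operatorname{Ric}(\nabla f)$, and then inserting $\nabla\Delta f=-(1-n\rho)\nabla R$ from the trace identity, one obtains
\begin{equation*}
\operatorname{Ric}(\nabla f)=\Bigl(\tfrac12-(n-1)\rho\Bigr)\nabla R .
\end{equation*}

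The heart of the argument is an integral computation on the compact manifold $M$. From $\nabla\Delta f=-(1-n\rho)\nabla R$ and integration by parts,
\begin{equation*}
\int_M\langle\nabla R,\nabla f\rangle\,dv_g=\frac{1}{1-n\rho}\int_M(\Delta f)^2\,dv_g ,
\end{equation*}
and, using in addition $\int_M\Delta f\,dv_g=0$, the right-hand side equals $(1-n\rho)\bigl(\int_M R^2\,dv_g-\tfrac{1}{\mathrm{Vol}(M)}(\int_M R\,dv_g)^2\bigr)$. By the Cauchy--Schwarz inequality this quantity has the sign of $1-n\rho$ and vanishes exactly when $R$ is constant. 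If $1-n\rho\le 0$ then $\int_M\langle\nabla R,\nabla f\rangle\le 0$, so hypothesis (ii) of Theorem \ref{thmDwi} holds and $M$ is isometric to the Euclidean sphere, which has constant scalar curvature. In the degenerate case $\rho=\tfrac1n$ the trace identity forces $\Delta f$ to be constant, hence $\lambda=0$ and $\Delta f\equiv 0$ on the compact $M$, so $f$ is constant and the soliton is trivial, contrary to hypothesis. It remains to treat $0\ne\rho<\tfrac1n$: here I would feed $\operatorname{Ric}(\nabla f)=(\tfrac12-(n-1)\rho)\nabla R$ and \eqref{ro soliton} into the Bochner formula for $|\nabla f|^2$ (and/or into the equation for $\Delta R$ obtained from one further divergence), integrate over $M$, and combine with the sharp pointwise bounds $|\operatorname{Hess}f|^2\ge\tfrac1n(\Delta f)^2$ and $|\operatorname{Ric}|^2\ge\tfrac1n R^2$ to force $\int_M(\Delta f)^2\,dv_g=0$; then $R$ is constant and hypothesis (i) of Theorem \ref{thmDwi} applies. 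In every case $M$ has constant scalar curvature and is isometric to the Euclidean sphere.

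I expect the last step to be the main obstacle: the naive integration-by-parts identity provides the sign needed for hypothesis (ii) only when $\rho\ge\tfrac1n$, so for $\rho<\tfrac1n$ one must supply an independent reason — a complementary integral identity, or a pointwise curvature consequence of $\operatorname{Ric}(\nabla f)=(\tfrac12-(n-1)\rho)\nabla R$ (this range includes the Schouten value $\rho=\tfrac1{2(n-1)}$, where the coefficient $\tfrac12-(n-1)\rho$ vanishes and $\operatorname{Ric}(\nabla f)\equiv 0$) — that makes the variance of $R$ collapse. Once constant scalar curvature is in hand, the passage to the Euclidean sphere is immediate from Theorem \ref{thmDwi}.
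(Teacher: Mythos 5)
First, note that the paper you are working against does not actually prove this statement: Theorem \ref{Absos} is imported verbatim from the reference \cite{Absos}, so there is no internal proof to compare with. Your attempt therefore has to stand on its own, and as it stands it has a genuine gap.

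Your reduction to Theorem \ref{thmDwi} via the sign of $\int_M\langle\nabla R,\nabla f\rangle$ is correct as far as it goes: the identities $\Delta f=n\lambda-(1-n\rho)R$ and $\operatorname{Ric}(\nabla f)=\bigl(\tfrac12-(n-1)\rho\bigr)\nabla R$ are right (they are \eqref{eq3} and \eqref{gradescalar}), the computation
$\int_M\langle\nabla R,\nabla f\rangle=\tfrac{1}{1-n\rho}\int_M(\Delta f)^2=(1-n\rho)\bigl(\int_MR^2-\tfrac{1}{\mathrm{Vol}(M)}(\int_MR)^2\bigr)$
is correct, and so is the separate treatment of $\rho=\tfrac1n$. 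This settles $\rho\ge\tfrac1n$. The problem is the remaining range $0\ne\rho<\tfrac1n$, which you explicitly leave open and which is not a marginal case: it contains the Schouten value $\tfrac{1}{2(n-1)}$, all negative $\rho$, and in fact the entire interval $\rho<\tfrac{1}{2(n-1)}$ on which the Ricci--Bourguignon flow is parabolic. Moreover, the repair you sketch provably does not close it. Integrating the Bochner identity of Lemma \ref{lemma6} over compact $M$ and substituting \eqref{gradescalar} and your integral formula gives exactly
\begin{equation*}
\int_M|\operatorname{Hess}f|^2=\frac{1-2\rho}{2(1-n\rho)}\int_M(\Delta f)^2,
\end{equation*}
and combining this with $|\operatorname{Hess}f|^2\ge\tfrac1n(\Delta f)^2$ yields
\begin{equation*}
\frac{n-2}{2n(1-n\rho)}\int_M(\Delta f)^2\ \ge\ 0,
\end{equation*}
whose coefficient is \emph{positive} precisely when $\rho<\tfrac1n$ and $n\ge3$; the inequality is then vacuous and forces nothing. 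The same happens if you instead integrate \eqref{flaplaR} and use $|\operatorname{Ric}|^2\ge\tfrac1nR^2$: after eliminating $\lambda$ via $n\lambda=(1-n\rho)\bar R$ the resulting inequality reduces to one that holds automatically by Cauchy--Schwarz. So for $\rho<\tfrac1n$ you need a genuinely new input (this is where the actual difficulty of the cited theorem lives, and where the trivialty results of Catino--Mazzieri for the Einstein, Schouten and traceless cases enter by entirely different arguments), not a rearrangement of the two identities you already have. Until that case is supplied, the proof is incomplete.
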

 
 \indent This paper is organized in the following way: In section $1$ we have proved that  a gradient $\rho$-Einstein soliton endowed with a vector field of bounded norm and some other conditions is isometric to the Euclidean sphere $\mathbb{S}^n$. Also we have proved that a non-trivial complete gradient $\rho$-Einstein soliton with finite weighted Dirichlet integral and certain condition on Ricci curvature must be of constant scalar curvature and steady Ricci flat. In the last section we have showed that a non-shrinking or non-expanding gradient traceless Ricci soliton with some conditions must be steady. 
 
\section{$\rho$-Einstein solitons isometric to the Euclidean sphere}
We start this section with the following compactness lemma. 

\begin{lem}\label{lemma mandal}
Let $(M, g, X, \rho)$ be a $\rho$-Einstein soliton with $\|X\|$ bounded and $\rho R \geq k$, for some real constant k such that $(\lambda+k)>0$. Then $M$ is compact.
\end{lem}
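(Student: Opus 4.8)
The plan is to show that boundedness of $\|X\|$ together with the lower bound $\rho R \ge k$ forces a Ricci curvature lower bound that, via the $\rho$-Einstein equation, controls the Bakry–Émery–type structure and eventually forces finite diameter. More concretely, I would proceed as follows. First, take the trace of the soliton equation \eqref{a2}: contracting with $g$ gives $R + \operatorname{div}X - n\rho R = n\lambda$, i.e. $\operatorname{div}X = n\lambda - (1-n\rho)R$. This already links the divergence of $X$ to the scalar curvature, and the hypothesis $\rho R \ge k$ will translate into a one-sided bound on $\operatorname{div}X$ once the sign of $1-n\rho$ is tracked.

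Second, and this is the geometric heart of the argument, I would rewrite \eqref{a2} as $\operatorname{Ric} = (\lambda + \rho R)g - \tfrac12\mathcal L_Xg \ge (\lambda + k)g - \tfrac12\mathcal L_Xg$. The idea is to run a Bonnet–Myers–type comparison: along a minimizing geodesic $\gamma$ of length $\ell$, integrate the second variation / Riccati inequality for the Ricci curvature in the radial direction. The term $\int_0^\ell \operatorname{Ric}(\dot\gamma,\dot\gamma)\,dt$ is bounded below by $(\lambda+k)\ell$ minus a contribution from $\int_0^\ell (\mathcal L_Xg)(\dot\gamma,\dot\gamma)\,dt = 2\int_0^\ell \langle \nabla_{\dot\gamma}X,\dot\gamma\rangle\,dt = 2\bigl(\langle X,\dot\gamma\rangle\big|_0^\ell\bigr)$ after noticing this is a total derivative along the geodesic; hence it is bounded in absolute value by $2\sup\|X\| =: 2C$, independently of $\ell$. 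Feeding this into the index-form inequality (test the index form on $\sin(\pi t/\ell)E$ for a parallel unit normal field $E$, summed over an orthonormal frame) yields an inequality of the shape $(n-1)\pi^2/\ell \;\ge\; (\lambda+k)\ell - 2C$, which cannot hold once $\ell$ is large since $(\lambda+k)>0$. This caps the distance between any two points, so $M$ is bounded, hence (being complete — I would note that $\rho$-Einstein solitons are assumed complete, or add completeness as a standing hypothesis) $M$ is compact by Hopf–Rinow.

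Third, I would be careful about two technical points. The Myers-type argument above needs $M$ complete for minimizing geodesics to exist between arbitrary points; if the definition in the paper does not build in completeness I would state it as an assumption of the lemma. Also the constant $C = \sup_M\|X\|$ must be finite, which is exactly the boundedness hypothesis. The estimate $|\langle X,\dot\gamma\rangle| \le \|X\|\,|\dot\gamma| = \|X\|$ along a unit-speed geodesic is where boundedness enters decisively.

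**Main obstacle.** The delicate step is controlling the $\mathcal L_Xg$ term uniformly in the geodesic length. The observation that $\int_0^\ell (\mathcal L_Xg)(\dot\gamma,\dot\gamma)\,dt$ telescopes to a boundary term $2\langle X,\dot\gamma\rangle\big|_0^\ell$ — and hence is $O(1)$ rather than $O(\ell)$ — is what makes the positive term $(\lambda+k)\ell$ win for large $\ell$; getting this bookkeeping exactly right (signs, the $\tfrac12$, and the frame summation in the index form) is the part that requires genuine care, whereas the rest is a routine Bonnet–Myers comparison.
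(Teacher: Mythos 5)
Your proposal is correct and rests on exactly the same key computation as the paper: along a unit-speed geodesic one has $\tfrac12(\mathcal L_Xg)(\gamma',\gamma')=\tfrac{d}{ds}\langle X,\gamma'\rangle$, so the Lie-derivative contribution to $\int \operatorname{Ric}(\gamma',\gamma')$ is a boundary term controlled by $\sup\|X\|$, while the $(\lambda+\rho R)\ge(\lambda+k)>0$ term grows linearly in the length. Where you diverge is only in how you close the argument: the paper feeds the resulting divergence $\int_0^{\infty}\operatorname{Ric}(\gamma',\gamma')\,ds=+\infty$ directly into Ambrose's compactness criterion, whereas you rerun the Bonnet--Myers second-variation argument by hand with the test field $\sin(\pi t/\ell)E$. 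Your route is self-contained but needs one extra step you gloss over: with the weight $\sin^2(\pi t/\ell)$ the Lie-derivative term no longer telescopes outright, and you must integrate by parts (the boundary term vanishes since $\sin$ does, and the remaining integral is still $O(\sup\|X\|)$ uniformly in $\ell$), after which the contradiction for large $\ell$ goes through. The trace identity in your first step is correct but plays no role in the rest of the argument. Your caveat about completeness is well taken and applies equally to the paper, since Ambrose's theorem also presupposes a complete manifold; neither the lemma's statement nor the paper's proof makes this hypothesis explicit.
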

\begin{proof}
Let $o$ be a point in $M$ and consider any geodesic $\gamma : [0,\infty ) \rightarrow M $ emanating from $o$ and parametrized by arc length $s$. Then along $\gamma$ we obtain 
\begin{equation*}
\mathcal{L}_X g(\gamma', \gamma') = 2g(\nabla_{\gamma'}X,\gamma')=2 \frac{d}{ds}[g(X,\gamma')].
\end{equation*}
Thus from (\ref{a2}) and using the Cauchy-Schwarz inequality, we have
\begin{eqnarray*}
\int_{0}^{r} Ric(\gamma'(s), \gamma'(s)) ds &=& \lambda r + \int_{0}^{r} \rho R g + g(X_o,\gamma'(0))-g(X_{\gamma(r)}, \gamma'(r))\\
&\geq& \lambda r + kr+ g(X_o, \gamma'(0))-\|X_{\gamma(r)}\|.
\end{eqnarray*}
Now, since $\|X\|$ is bounded, we obtain that
\begin{equation*}
\int_{0}^{+\infty} Ric(\gamma'(s),\gamma'(s))= + \infty,
\end{equation*} 
and from Ambrose's compactness criteria \cite {WA1957} it follows that $M$ is compact.
\end{proof}
\begin{thm}
Let $(M, g, f, \rho)$ be a non-trivial gradient $\rho$-Einstein soliton with $\|\nabla f\|$ bounded and $\rho R \geq k$, for some real constant k such that $(\lambda+k)>0$. Then $M$ is isometric to the Euclidean sphere.
\end{thm}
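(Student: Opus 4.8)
The plan is to derive compactness of $M$ from the two given bounds and then invoke the already–available rigidity result for the compact case; the whole argument is then just a matter of matching hypotheses.

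\emph{Step 1 (reduction to the compact case).} I would apply Lemma \ref{lemma mandal} with the vector field $X=\nabla f$. By hypothesis $\|\nabla f\|=\|X\|$ is bounded and $\rho R\geq k$ with $(\lambda+k)>0$, so every assumption of that lemma holds and hence $M$ is compact. (The mechanism is exactly the one used in the proof of Lemma \ref{lemma mandal}: integrate (\ref{a2}) along an arbitrary unit–speed geodesic $\gamma:[0,\infty)\to M$ issuing from a fixed point; the term $\tfrac12\mathcal{L}_{\nabla f}g(\gamma',\gamma')$ is the total derivative of $g(\nabla f,\gamma')$, which stays bounded because $\|\nabla f\|$ is bounded, while the $\rho R g(\gamma',\gamma')$ term is bounded below by $k$. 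Thus $\int_0^{\infty}\mathrm{Ric}(\gamma',\gamma')\,ds=+\infty$ along every ray, and Ambrose's compactness criterion forces $M$ to be compact.)

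\emph{Step 2 (rigidity on the compact manifold).} Now $(M,g,f,\rho)$ is a \emph{non-trivial compact} gradient $\rho$-Einstein soliton, so Theorem \ref{Absos} applies verbatim: $M$ has constant scalar curvature and is therefore isometric to the Euclidean sphere $\mathbb{S}^n$. This finishes the proof.

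\emph{On the main difficulty.} There is essentially no serious obstacle here, since all the substance is packaged in Lemma \ref{lemma mandal} and Theorem \ref{Absos}; the only point I would take care to state explicitly is that Ambrose's criterion, as used in Lemma \ref{lemma mandal}, presupposes geodesic completeness, so completeness should either be part of the standing assumptions or be obtained from the boundedness of $\|\nabla f\|$ together with (\ref{a2}). As an alternative, more self-contained route one could avoid citing Theorem \ref{Absos} and instead, after Step 1, combine compactness with the standard integral identities for gradient $\rho$-Einstein solitons (integrating (\ref{ro soliton}) and its trace, together with the contracted second Bianchi identity) to conclude that $\int_M\langle\nabla R,\nabla f\rangle\leq 0$, and then apply Theorem \ref{thmDwi}(ii); but given that Theorem \ref{Absos} is stated above, the two-step argument is the shortest.
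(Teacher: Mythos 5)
Your proposal is correct and follows essentially the same route as the paper: Lemma \ref{lemma mandal} applied to $X=\nabla f$ gives compactness, and then Theorem \ref{Absos} (with Theorem \ref{thmDwi}) yields the isometry with the Euclidean sphere. Your side remark that Ambrose's criterion requires completeness is a fair observation, but otherwise there is no substantive difference from the paper's argument.
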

\begin{proof}
From Lemma \ref{lemma mandal} it follows that $M$ is compact. Now from Theorem \ref{Absos} we get that the scalar curvature $R$ is constant and therefore from Theorem \ref{thmDwi}, we obtain the desired result.
\end{proof}

The following results are well known to gradient $\rho$-Einstein solitons.

\begin{lem}{\cite{Catino14}}\label{lemma1}
Let $(M, g, f,\rho)$ be a gradient $\rho$-Einstein soliton. Then the following identities hold:
\begin{equation}\label{eq3}
 \Delta f=(n\rho-1)R+n\lambda,
\end{equation}
\begin{equation}\label{gradescalar}
(1-2(n-1)\rho)\nabla R=2{\rm Ric}(\nabla f,\cdot),
\end{equation}
\begin{equation}\label{flaplaR}
(1-2(n-1)\rho)\Delta R=\langle\nabla R,\nabla f\rangle+2\lambda R+2(\rho R^2-|{\rm Ric}|^2+\lambda R).
\end{equation}
\end{lem}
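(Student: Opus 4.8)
The plan is to obtain all three identities purely formally from the gradient soliton equation (\ref{ro soliton}), namely ${\rm Ric}+{\rm Hess}(f)-\rho Rg=\lambda g$, by tracing it and then taking successive divergences, feeding each identity into the next one. The only external inputs are the contracted second Bianchi identity ${\rm div}\,{\rm Ric}=\tfrac12\nabla R$ and the commutation formula ${\rm div}({\rm Hess}(f))=\nabla(\Delta f)+{\rm Ric}(\nabla f,\cdot)$, the latter coming from the Ricci identity applied to the $1$-form $df$. For (\ref{eq3}) I would simply take the metric trace of (\ref{ro soliton}): since ${\rm tr}\,{\rm Ric}=R$, ${\rm tr}\,{\rm Hess}(f)=\Delta f$ and ${\rm tr}\,g=n$, this yields $R+\Delta f-n\rho R=n\lambda$, which is precisely $\Delta f=(n\rho-1)R+n\lambda$.

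For (\ref{gradescalar}) I would take the divergence of (\ref{ro soliton}). The right-hand side $\lambda g$ is divergence-free, while the divergence of the left-hand side equals $\tfrac12\nabla R+\nabla(\Delta f)+{\rm Ric}(\nabla f,\cdot)-\rho\,\nabla R$; substituting $\nabla(\Delta f)=(n\rho-1)\nabla R$ from (\ref{eq3}) and collecting the terms proportional to $\nabla R$ leaves $\bigl((n-1)\rho-\tfrac12\bigr)\nabla R+{\rm Ric}(\nabla f,\cdot)=0$, and multiplying through by $-2$ gives (\ref{gradescalar}).

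Finally, for (\ref{flaplaR}) I would take the divergence of (\ref{gradescalar}). Writing ${\rm div}({\rm Ric}(\nabla f,\cdot))=({\rm div}\,{\rm Ric})(\nabla f)+\langle{\rm Ric},{\rm Hess}(f)\rangle=\tfrac12\langle\nabla R,\nabla f\rangle+\langle{\rm Ric},{\rm Hess}(f)\rangle$, and using (\ref{ro soliton}) in the form ${\rm Hess}(f)=(\lambda+\rho R)g-{\rm Ric}$ so that $\langle{\rm Ric},{\rm Hess}(f)\rangle=(\lambda+\rho R)R-|{\rm Ric}|^2$, a routine rearrangement delivers (\ref{flaplaR}). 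I do not expect any genuine obstacle here; the only points needing care are the curvature sign convention in the commutation formula for ${\rm div}({\rm Hess}(f))$ and the bookkeeping of the coefficient $1-2(n-1)\rho$, which is exactly the quantity that degenerates at the Schouten value $\rho=\tfrac{1}{2(n-1)}$ and therefore cannot be cancelled in general.
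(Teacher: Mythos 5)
The paper does not prove this lemma at all: it is quoted verbatim from Catino et al.\ \cite{Catino14}, so there is no in-paper argument to compare against. Your derivation --- trace the structure equation \eqref{ro soliton}, take its divergence using the contracted second Bianchi identity and ${\rm div}({\rm Hess}(f))=\nabla(\Delta f)+{\rm Ric}(\nabla f,\cdot)$, then take a divergence once more --- is exactly the standard proof, and your computations for \eqref{eq3} and \eqref{gradescalar} check out, including the coefficient bookkeeping that produces $1-2(n-1)\rho$.

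For \eqref{flaplaR}, however, the steps you describe do \emph{not} land on the identity as printed. Carrying them out gives
\[
(1-2(n-1)\rho)\Delta R=\langle\nabla R,\nabla f\rangle+2\bigl(\lambda R+\rho R^2-|{\rm Ric}|^2\bigr),
\]
whereas the displayed \eqref{flaplaR} carries an additional standalone $2\lambda R$. The printed version is in fact incorrect: on a trivial soliton ($f$ constant, so ${\rm Ric}=(\lambda+\rho R)g$, $R$ constant, $|{\rm Ric}|^2=R^2/n$) the formula above reduces to $0=0$, while the printed one reduces to $0=2\lambda R$, which fails whenever $\lambda R\neq 0$. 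So your method is the right one, but your claim that ``a routine rearrangement delivers \eqref{flaplaR}'' should be amended: what the computation delivers is the corrected identity (the one actually appearing in \cite{Catino14}), and reproducing the formula as stated here would require introducing a spurious $2\lambda R$.
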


The next lemma gives a Bochner type formula for gradient $\rho$-Einstein solitons.

\begin{lem}\label{lemma6}
Let $(M, g,  f, \rho)$ be a complete gradient $\rho$-Einstein soliton with \linebreak$\rho\not=\frac{1}{2(n-1)}$. Then
\begin{equation}
\frac{1}{2}\Delta|\nabla f|^2=\frac{2\rho-1}{1-2(n-1)\rho}{\rm Ric}(\nabla f, \nabla f)+|Hess(f)|^2.
\end{equation}
\end{lem}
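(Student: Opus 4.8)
The plan is to combine the classical Bochner formula with the structural identities recorded in Lemma \ref{lemma1}. Recall that for any smooth function $f$ on a Riemannian manifold one has the pointwise Bochner identity
\begin{equation*}
\frac{1}{2}\Delta|\nabla f|^2 = |Hess(f)|^2 + \langle\nabla f, \nabla\Delta f\rangle + {\rm Ric}(\nabla f, \nabla f).
\end{equation*}
Thus the entire proof reduces to re-expressing the middle term $\langle\nabla f, \nabla\Delta f\rangle$ in terms of ${\rm Ric}(\nabla f,\nabla f)$, using the soliton equation.

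First I would differentiate the trace identity (\ref{eq3}), namely $\Delta f = (n\rho-1)R + n\lambda$, to obtain $\nabla\Delta f = (n\rho-1)\nabla R$, hence $\langle\nabla f, \nabla\Delta f\rangle = (n\rho-1)\langle\nabla R, \nabla f\rangle$. Next, since $\rho\neq\frac{1}{2(n-1)}$ the factor $1-2(n-1)\rho$ is nonzero, so I can rearrange (\ref{gradescalar}) into $\langle\nabla R, \nabla f\rangle = \frac{2}{1-2(n-1)\rho}\,{\rm Ric}(\nabla f, \nabla f)$. Substituting both facts into the Bochner identity gives
\begin{equation*}
\frac{1}{2}\Delta|\nabla f|^2 = |Hess(f)|^2 + \left(\frac{2(n\rho-1)}{1-2(n-1)\rho} + 1\right){\rm Ric}(\nabla f, \nabla f).
\end{equation*}

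The final step is purely algebraic: placing the bracketed quantity over the common denominator $1-2(n-1)\rho$, the numerator simplifies to $2(n\rho-1) + 1 - 2(n-1)\rho = 2\rho-1$, yielding exactly the stated coefficient $\frac{2\rho-1}{1-2(n-1)\rho}$. I do not anticipate a genuine obstacle here; the only points demanding care are invoking the hypothesis $\rho\neq\frac{1}{2(n-1)}$ to justify the division and noting that completeness guarantees the formula holds everywhere on $M$. The geometric content is entirely carried by the identities (\ref{eq3}) and (\ref{gradescalar}) of Lemma \ref{lemma1}.
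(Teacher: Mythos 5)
Your proposal is correct and is essentially identical to the paper's own proof: both start from the Bochner identity, substitute $\nabla\Delta f=(n\rho-1)\nabla R$ from (\ref{eq3}), convert $\langle\nabla R,\nabla f\rangle$ via (\ref{gradescalar}), and finish with the same algebraic simplification of the coefficient to $\frac{2\rho-1}{1-2(n-1)\rho}$. No discrepancies to report.
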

\begin{proof}
From Bochner formula and equation \eqref{eq3} we have
$$\frac{1}{2}\Delta|\nabla f|^2=Ric(\nabla f, \nabla f)+(n\rho-1)\langle\nabla f, \nabla R\rangle+|Hess(f)|^2.$$
Now using equation \eqref{gradescalar} we obtain
\begin{eqnarray}
\frac{1}{2}\Delta|\nabla f|^2 &=& Ric(\nabla f, \nabla f)+(n\rho-1)\langle\nabla f, \nabla R\rangle+|Hess(f)|^2\\
&=&\left[\frac{2(n\rho-1)}{1-2(n-1)\rho}+1\right]Ric(\nabla f,\nabla f)+|Hess(f)|^2\\
&=& \left[\frac{2\rho-1}{1-2(n-1)\rho}\right]Ric(\nabla f,\nabla f)+|Hess(f)|^2.
\end{eqnarray}
\end{proof}
\begin{thm}
Let $(M, g,  f,\rho)$ be a non-trivial complete gradient $\rho$-Einstein soliton. Suppose that the potential function has finite weighted Dirichlet integral, i.e.,
\begin{equation}
\int_{M\backslash B(q,r)}d(x,q)^{-2}|\nabla f|^2<\infty,
\end{equation}
where $B(q,r)$ is a ball with radius $r>0$ and centre at $q$ and $d(x,q)$ is the distance function
from some fixed point $q\in M$. If any one of the following
\begin{enumerate}
\item[i)] the Ricci curvature is non-positive and either $\rho>\frac{1}{2}$ or $\rho<\frac{1}{2(n-1)}$,
\item[ii)] the Ricci curvature is non-negative and  $\frac{1}{2(n-1)}<\rho<\frac{1}{2}$,
\end{enumerate}
holds, then the scalar curvature $R$ is constant and $M$ is steady Ricci flat.
\end{thm}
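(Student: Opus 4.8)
The plan is to show, via the Bochner-type identity of Lemma~\ref{lemma6}, that $u:=|\nabla f|^2$ is a non-negative subharmonic function, then to invoke a Liouville-type theorem for subharmonic functions of controlled growth to conclude that $u$ is constant, and finally to extract the rigidity directly from the soliton equation \eqref{ro soliton}.

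First I would note that under either hypothesis $\rho\neq\tfrac12$ and $\rho\neq\tfrac{1}{2(n-1)}$, so that Lemma~\ref{lemma6} applies and $c_\rho:=\tfrac{2\rho-1}{1-2(n-1)\rho}$ is a well-defined nonzero constant. A routine sign analysis gives $c_\rho<0$ in case (i) and $c_\rho>0$ in case (ii); combined with ${\rm Ric}\le 0$ in case (i) and ${\rm Ric}\ge 0$ in case (ii), in both cases $c_\rho\,{\rm Ric}(\nabla f,\nabla f)\ge 0$. Since $|{\rm Hess}(f)|^2\ge 0$, Lemma~\ref{lemma6} then yields $\Delta|\nabla f|^2\ge 0$ on the complete manifold $M$.

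Next I would use the finite weighted Dirichlet integral. Setting $\varepsilon(r):=\int_{M\setminus B(q,r)}d(x,q)^{-2}u\,dV_g$, the hypothesis says $\varepsilon(r_0)<\infty$ for some $r_0$, so $\varepsilon(r)\to 0$ as $r\to\infty$; since $d(x,q)\le 2r$ on $B(q,2r)\setminus B(q,r)$ this gives $\int_{B(q,2r)\setminus B(q,r)}u\le 4r^2\varepsilon(r)$, hence $r^{-2}\int_{B(q,2r)\setminus B(q,r)}u\to 0$. By the Liouville-type theorem for non-negative subharmonic functions with this annular growth (Karp's criterion; if $M$ is compact the maximum principle gives $u$ constant directly), $u$ is constant. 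As $(M,g,f,\rho)$ is non-trivial, $\nabla f\not\equiv 0$, so $u\equiv c$ for a constant $c>0$.

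Finally, feeding $\Delta|\nabla f|^2=0$ back into Lemma~\ref{lemma6} forces both $|{\rm Hess}(f)|^2$ and $c_\rho\,{\rm Ric}(\nabla f,\nabla f)$ to vanish; since $c_\rho\neq 0$ this means ${\rm Hess}(f)\equiv 0$ and ${\rm Ric}(\nabla f,\nabla f)\equiv 0$. Contracting \eqref{ro soliton} with $\nabla f$ in both slots and using these two facts together with $|\nabla f|^2=c>0$ gives $\lambda+\rho R=0$; substituting back into \eqref{ro soliton} (with ${\rm Hess}(f)=0$) yields ${\rm Ric}\equiv 0$, whence $R=0$ and then $\lambda=0$. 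Thus $R$ is constant, $M$ is Ricci flat, and the soliton is steady. I expect the genuinely delicate point to be the middle step: pinning down the precise Liouville/parabolicity statement that upgrades ``non-negative subharmonic with finite weighted Dirichlet integral'' to ``constant'', and checking that no auxiliary curvature hypothesis is quietly needed to run the cut-off argument behind it; the Bochner sign bookkeeping across the ranges of $\rho$ is the other spot requiring care, but it is straightforward once organized case by case.
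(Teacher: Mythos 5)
Your overall strategy coincides with the paper's: the same sign analysis of $c_\rho=\tfrac{2\rho-1}{1-2(n-1)\rho}$ showing $c_\rho\,{\rm Ric}(\nabla f,\nabla f)\ge 0$ in both cases, the same use of Lemma \ref{lemma6} to make $|\nabla f|^2$ subharmonic, the same reduction of the weighted Dirichlet hypothesis to $r^{-2}\int_{B(q,2r)\setminus B(q,r)}|\nabla f|^2\to 0$, and the same endgame ($\mathrm{Hess}(f)=0$, ${\rm Ric}(\nabla f,\nabla f)=0$, then $\lambda+\rho R=0$, ${\rm Ric}=0$, $\lambda=0$). The one genuine soft spot is exactly the one you flagged: the Liouville step. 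The theorem of Yau/Karp that upgrades a non-negative subharmonic function to a constant requires an $L^p$-type control with exponent $p>1$ (this is Lemma \ref{lema Yau} in the paper, quoting \cite{Yau:76}); at the exponent $p=1$, which is what your annular estimate provides, the corresponding Liouville statement is \emph{false} on a general complete manifold without curvature assumptions (Li and Schoen constructed complete manifolds carrying non-constant non-negative $L^1$ (sub)harmonic functions). So "Karp's criterion" cannot be invoked as a black box here to conclude that $u=|\nabla f|^2$ is constant.

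Fortunately you do not need $u$ constant: your final step only uses $\Delta u\equiv 0$, and since Lemma \ref{lemma6} writes $\tfrac12\Delta u$ as a sum of two non-negative terms, it suffices to show $\int_M \Delta u=0$. This is what the paper does directly: multiply the Bochner identity by $\phi_r^2$ for the Cheeger--Colding cut-off \eqref{prob 0} (with $|\nabla\phi_r|^2,\ \Delta\phi_r\le C_1/r^2$), integrate by parts to get $\int_M \tfrac12\Delta u\,\phi_r^2=\int_M\tfrac12 u\,\Delta(\phi_r^2)\le \tfrac{C}{r^2}\int_{M\setminus B(q,r)}u\to 0$, and let $r\to\infty$; positivity of the integrand then forces $\mathrm{Hess}(f)\equiv 0$ and ${\rm Ric}(\nabla f,\nabla f)\equiv 0$ pointwise. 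If you replace your Liouville citation by this explicit computation, your argument closes and is essentially identical to the paper's. (One caveat you would then share with the paper: a cut-off with $\Delta\phi_r\le C_1/r^2$ is constructed in \cite{Cheeger} under a lower Ricci bound, which is automatic in case (ii) but not guaranteed by ${\rm Ric}\le 0$ in case (i).)
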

\begin{proof}
Let us consider the cut-off function, introduced in \cite{Cheeger}, $\phi_r\in C_0^\infty(B(q,2r))$ for $r>0$, such that
\begin{eqnarray}\label{prob 0}\left\{ \begin{array}{lllll}
0\leq\phi_r\leq1 & {\rm in}\,\,B(q,2r)\\
\,\,\,\,\,\phi_r=1 & {\rm in}\,\, B(q,r)\\
\,\,\,\,\,|\nabla\phi_r|^2\leq\frac{C_1}{r^2} & {\rm in}\,\,B(q,2r)\\
\,\,\,\,\,\Delta\phi_r\leq\frac{C_1}{r^2} & {\rm in}\,\,B(q,2r),\\
\end{array}\right.
\end{eqnarray}
where $C_1>0$ is a constant. From Lemma \ref{lemma6} we obtain
\begin{equation}
\int_M|Hess( f)|^2\phi_r^2+\frac{2\rho-1}{1-2(n-1)\rho}\int_MRic(\nabla f,\nabla f)\phi_r^2=\int_M\frac{1}{2}\Delta|\nabla f|^2\phi_r^2.
\end{equation}
Now using integration by parts and our assumption, we have
$$\int_M\frac{1}{2}\Delta|\nabla f|^2\phi_r^2=\int_M\frac{1}{2}|\nabla f|^2\Delta\phi_r^2\leq\int_{{M}\backslash B(q,r)}\frac{C_1}{2r^2}|\nabla f|^2\rightarrow0,$$
as $r\rightarrow\infty$. Hence, we obtain
$$\int_M|Hess( f)|^2+\frac{2\rho-1}{1-2(n-1)\rho}\int_MRic(\nabla f,\nabla f)=0.$$
Since in both cases $(i)$ and $(ii)$,
$$\frac{2\rho-1}{1-2(n-1)\rho}Ric(\nabla f,\nabla f)\geq0,$$
thus we obtain $Hess( f)=0$ and $Ric(\nabla f,\nabla f)=0$. Substituting this in the structural equation, we obtain
$$(\lambda+\rho R)|\nabla f|^2=0.$$
Therefore, $R=-\frac{\lambda}{\rho}$ and from structural equation we get $Ric=0$. Finally equation \eqref{flaplaR} yields $\lambda=0$.
\end{proof}
\begin{rem}
In the above theorem the condition $\rho=\frac{1}{2(n-1)}$ was observed  by Catino et. all \cite{Catino-Mazzieri}. They  proved that every complete gradient steady Schouten soliton is trivial, hence Ricci flat.
\end{rem}

\section{Steady gradient $\rho$-Einstein solitons}
First we state some results as lemmas which will be used in the sequel. The first one is due to Caminha et al. \cite{Caminha:10} (for more details, see Proposition 1 of \cite{Caminha:10}). 
\begin{lem}\label{lemma Caminha}
Let $X$ be a smooth vector field on the $n$-dimensional, complete, non-compact, oriented Riemannian manifold $M$, such that ${\rm div}_M X$ does not change sign on $M$. If $|X| \in L^1(M)$, then ${\rm div}_M X = 0$.
\end{lem}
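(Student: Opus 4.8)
The plan is to reduce the statement to a boundary-term estimate obtained from a standard cut-off exhaustion of $M$. Replacing $X$ by $-X$ if necessary, I may assume that ${\rm div}_M X\ge 0$ on all of $M$; the goal is then to show that $\int_K {\rm div}_M X\, dv=0$ for every compact set $K\subset M$, which forces ${\rm div}_M X\equiv 0$ since the integrand is continuous and nonnegative.

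First I would fix a point $q\in M$ and, for each $r>0$, choose a cut-off $\phi_r\in C_0^\infty(B(q,2r))$ with $0\le\phi_r\le1$, $\phi_r\equiv1$ on $B(q,r)$, and $|\nabla\phi_r|\le \frac{C_1}{r}$ on $M$ — the same type of function already used in Section 1, of which only the gradient bound is needed here. Since $\phi_r X$ has compact support, the divergence theorem gives
$$\int_M \phi_r\,{\rm div}_M X\, dv=-\int_M\langle\nabla\phi_r,X\rangle\, dv.$$
Because $\nabla\phi_r$ is supported in the annulus $B(q,2r)\setminus B(q,r)$, the Cauchy–Schwarz inequality yields
$$\left|\int_M\langle\nabla\phi_r,X\rangle\, dv\right|\le\frac{C_1}{r}\int_{B(q,2r)\setminus B(q,r)}|X|\, dv\le\frac{C_1}{r}\int_M|X|\, dv\longrightarrow 0$$
as $r\to\infty$, where the hypothesis $|X|\in L^1(M)$ is exactly what makes the right-hand side finite and the limit zero.

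Finally I would conclude by a monotone comparison: given a compact $K\subset M$, pick $r$ large enough that $K\subset B(q,r)$, so $\phi_r\equiv1$ on $K$; using ${\rm div}_M X\ge0$,
$$0\le\int_K{\rm div}_M X\, dv\le\int_M\phi_r\,{\rm div}_M X\, dv=-\int_M\langle\nabla\phi_r,X\rangle\, dv\longrightarrow 0,$$
hence $\int_K{\rm div}_M X\, dv=0$ and therefore ${\rm div}_M X\equiv0$ on $K$; since $K$ was arbitrary, ${\rm div}_M X=0$ on $M$. The only genuinely delicate point is the legitimacy of the cut-off construction together with the vanishing of the annular term — this is where completeness of $M$ (to get the exhaustion by metric balls) and the integrability $|X|\in L^1(M)$ are essential; the rest is bookkeeping. (Orientability enters only to phrase the integration by parts cleanly, and the non-compactness hypothesis is harmless since for compact $M$ the identity $\int_M{\rm div}_M X\, dv=0$ already gives the conclusion directly.)
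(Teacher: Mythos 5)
Your argument is correct and self-contained. The paper itself offers no proof of this lemma: it is quoted verbatim from Caminha--Sousa--Camargo (Proposition 1 of \cite{Caminha:10}), whose proof runs through a lemma of Yau on integrable $(n-1)$-forms --- one applies Yau's result to $\omega=\iota_X\,dV$, which satisfies $|\omega|\le|X|\in L^1$ and $d\omega=({\rm div}_M X)\,dV$, to produce an exhaustion $B_i\uparrow M$ with $\int_{B_i}{\rm div}_M X\to 0$, and then uses the sign condition. Your cut-off argument reproves exactly that exhaustion step from scratch via $\int_M\phi_r\,{\rm div}_M X=-\int_M\langle\nabla\phi_r,X\rangle$ and the decay $\tfrac{C_1}{r}\int_M|X|\to 0$, so it is an elementary, citation-free version of the same mechanism. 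Two points you handle correctly and that are worth keeping explicit: only the gradient bound $|\nabla\phi_r|\le C_1/r$ is needed (unlike the Laplacian bound in \eqref{prob 0}, this requires no curvature hypothesis --- just compose a fixed profile with the distance function on the complete manifold, smoothing the Lipschitz function if one insists on $C^\infty$ cut-offs); and the reduction to $\int_K{\rm div}_M X=0$ for every compact $K$ legitimately forces ${\rm div}_M X\equiv0$ because the integrand is continuous and of one sign. No gaps.
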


Here we use the notation $L^p(M) = \{u : M \rightarrow \mathbb{R} \ : \int_{M}|u|^p dM < +\infty\}$ for each real $p \geq 1$.

The second and third one are due to Yau, corresponding to Theorem 3 of \cite{Yau:76} and Schoen and Yau \cite{SC2010} respectively.

\begin{lem}\label{lema Yau}
Let $u$ be a non-negative smooth subharmonic function on a complete Riemannian manifold $M$ of dimension $n$. If $u\in L^p(M)$, for some $p > 1$, then $u$ is constant.
\end{lem}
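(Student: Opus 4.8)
The plan is to run the standard cut-off (Caccioppoli-type) argument. First I would fix a point $o\in M$ and, using completeness, choose for each $r>0$ a Lipschitz cut-off $\phi_r$ with $0\leq\phi_r\leq1$, $\phi_r\equiv1$ on $B(o,r)$, ${\rm supp}\,\phi_r\subset B(o,2r)$ and $|\nabla\phi_r|\leq C/r$ for a constant $C$ independent of $r$ (for instance $\phi_r=\eta(d(\cdot,o)/r)$ with $\eta$ a fixed Lipschitz bump function supported in $[0,2)$ and equal to $1$ on $[0,1]$); this is the only place where completeness is used. The next step is to test the subharmonicity inequality $\Delta u\geq0$ against the nonnegative, compactly supported function $\phi_r^2u^{p-1}$ and integrate by parts.

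Carrying this out, integration by parts will give
\begin{equation*}
0\leq\int_M\phi_r^2u^{p-1}\,\Delta u=-(p-1)\int_M\phi_r^2u^{p-2}|\nabla u|^2-2\int_M\phi_r u^{p-1}\langle\nabla\phi_r,\nabla u\rangle,
\end{equation*}
whence $(p-1)\int_M\phi_r^2u^{p-2}|\nabla u|^2\leq2\int_M\big(\phi_r u^{(p-2)/2}|\nabla u|\big)\big(u^{p/2}|\nabla\phi_r|\big)$. Applying Young's inequality with weight $\varepsilon=(p-1)/2$ to absorb half of the left-hand side, I would arrive at the Caccioppoli estimate
\begin{equation*}
\int_M\phi_r^2u^{p-2}|\nabla u|^2\leq\frac{4}{(p-1)^2}\int_M u^p|\nabla\phi_r|^2\leq\frac{4C^2}{(p-1)^2r^2}\int_{B(o,2r)}u^p\leq\frac{4C^2}{(p-1)^2r^2}\int_M u^p.
\end{equation*}

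Now I would invoke the hypothesis $u\in L^p(M)$: letting $r\to\infty$, the right-hand side tends to $0$, while by monotone convergence the left-hand side increases to $\int_M u^{p-2}|\nabla u|^2=\frac{4}{p^2}\int_M|\nabla u^{p/2}|^2$. This forces $\nabla u^{p/2}\equiv0$, and since $M$ is connected, $u^{p/2}$, and hence $u$, must be constant. For $p\geq2$ this is already rigorous as written, and one can even shortcut it by noting that $u^{p/2}$ is itself nonnegative, subharmonic and in $L^2(M)$, which reduces the statement to the elementary case $p=2$, where the weight $u^{p-2}$ equals $1$.

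The step I expect to be the main obstacle is the range $1<p<2$, where $u^{p-2}$ is singular on the zero set $\{u=0\}$ and $\phi_r^2u^{p-1}$ need not be $C^1$, so the integration by parts above is not immediately licit. To handle it I would repeat the whole computation with $u$ replaced by $u+\delta$, $\delta>0$: then $(u+\delta)^{p-1}$ is a smooth admissible test function and the estimate holds with $(u+\delta)^{p-2}$ and $(u+\delta)^p$ in place of $u^{p-2}$ and $u^p$. Using that $\nabla u\equiv0$ on $\{u=0\}$ (each such point being an interior minimum of $u$), that every geodesic ball $B(o,2r)$ has finite volume, and that $(u+\delta)^{p-2}\nearrow u^{p-2}$ on $\{u>0\}$ as $\delta\downarrow0$, I would let $\delta\to0$ first (with $r$ fixed, via monotone and dominated convergence) to recover the previous estimate, and only afterwards let $r\to\infty$ and conclude as before.
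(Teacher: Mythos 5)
Your proof is correct, and the paper itself offers no proof of this lemma --- it is quoted verbatim as Theorem~3 of Yau's 1976 paper --- so the relevant comparison is with Yau's original argument, which is exactly the cut-off/Caccioppoli computation you perform: test $\Delta u\geq 0$ against $\phi_r^2u^{p-1}$, absorb the cross term by Young's inequality to get $\int\phi_r^2u^{p-2}|\nabla u|^2\leq\frac{4}{(p-1)^2}\int u^p|\nabla\phi_r|^2$, and let $r\to\infty$ using $u\in L^p(M)$, with the regularization $u+\delta$ handling the range $1<p<2$. There is nothing to correct.
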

\begin{lem}\label{a10}\cite{SC2010}
If $f$ is a non-negative subharmonic function in $B(q,2r)$ contained in $M$, then the following inequality holds:
 \begin{equation}\label{a11}
 \int_{B(q,r)}|\nabla f|^2 \leq\frac{C}{r^2}\int_{B(q,2r)}f^2,
 \end{equation}
 where $B(q,r)$ is a ball with radius $r>0$ and center at $q$ and C is a real constant.
\end{lem}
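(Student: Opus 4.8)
The plan is to establish this inequality as a Caccioppoli-type estimate via the standard cutoff-function argument, using only the two hypotheses $f\ge 0$ and $\Delta f\ge 0$. In fact, the constant $C$ will come out to be an absolute constant, independent of $M$, $q$ and $r$, which is the point of the statement.

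First I would fix a Lipschitz cutoff $\phi$ on $M$ with $0\le\phi\le1$, $\phi\equiv1$ on $B(q,r)$, $\operatorname{supp}\phi\subset\overline{B(q,2r)}$ and $|\nabla\phi|\le c_0/r$ almost everywhere, for instance $\phi(x)=\eta(d(x,q))$ with $\eta$ equal to $1$ on $[0,r]$, affine on $[r,2r]$ and $0$ on $[2r,\infty)$, so that $c_0=1$. Since $f$ is smooth and subharmonic, $f\,\Delta f\ge0$ pointwise, hence $\int_M\phi^2 f\,\Delta f\ge0$. Integrating by parts (there is no boundary contribution, $\phi^2 f$ having compact support in $B(q,2r)$) yields
$$0\le\int_M\phi^2 f\,\Delta f=-\int_M\langle\nabla(\phi^2 f),\nabla f\rangle=-\int_M\phi^2|\nabla f|^2-2\int_M\phi f\,\langle\nabla\phi,\nabla f\rangle,$$
so that $\int_M\phi^2|\nabla f|^2\le 2\int_M\phi\,|f|\,|\nabla\phi|\,|\nabla f|$. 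Applying Young's inequality $2ab\le\tfrac12 a^2+2b^2$ with $a=\phi|\nabla f|$ and $b=|f|\,|\nabla\phi|$, and absorbing the resulting $\tfrac12\int_M\phi^2|\nabla f|^2$ into the left-hand side, I get $\int_M\phi^2|\nabla f|^2\le4\int_M f^2|\nabla\phi|^2$. Finally, using $\phi\equiv1$ on $B(q,r)$ and $\operatorname{supp}\nabla\phi\subset B(q,2r)\setminus B(q,r)$ with $|\nabla\phi|\le c_0/r$,
$$\int_{B(q,r)}|\nabla f|^2\le\int_M\phi^2|\nabla f|^2\le4\int_{B(q,2r)}f^2|\nabla\phi|^2\le\frac{4c_0^2}{r^2}\int_{B(q,2r)}f^2,$$
which is the claimed estimate with $C=4c_0^2$.

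The only point requiring a little care is the justification of the integration by parts, since the cutoff $\phi$ is merely Lipschitz; this is the sole, and very mild, technical obstacle. It is handled in the usual way, either by replacing $\phi$ with a smooth cutoff supported in $B(q,(2-\varepsilon)r)$ and letting $\varepsilon\to0$, or by a routine mollification of $\phi$. (If one wished to allow $f$ to be only weakly subharmonic of class $W^{1,2}_{\mathrm{loc}}$, the same computation would go through by testing the distributional inequality $\Delta f\ge0$ against the non-negative function $\phi^2 f$.) Everything else is elementary.
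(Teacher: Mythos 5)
Your argument is correct and is precisely the standard Caccioppoli-type cutoff proof; the paper itself offers no proof of this lemma, simply citing Schoen--Yau, and the proof in that reference is essentially the one you give (test $\Delta f\geq 0$ against $\phi^2 f\geq 0$, integrate by parts, absorb via Young's inequality). The only detail worth keeping in mind is that the absorption step presupposes $\int\phi^2|\nabla f|^2<\infty$, which is automatic here since $f$ is smooth and $\overline{B(q,2r)}$ is compact in the complete manifolds under consideration.
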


For the next result we note that a Riemannian manifold $M$ of dimension $n$ is parabolic if every subharmonic functions $u$ on $M$ with $u^\ast=\sup_Mu<+\infty$ must be a constant. Equivalently, if any positive superharmonic function $u$ ($i.e, \Delta u\leq 0$) is constant. For more details, see \cite{Grigoryan}.

\begin{thm}
Let $(M,g,f,\frac{1}{n})$ be a non-shrinking gradient traceless Ricci soliton with $f\geq k_1,$ for some real constant $k_1>0$. If any one of the following
\begin{enumerate}
\item[(i)] $M$ is parabolic,
\item[(ii)]$|\nabla f|\in L^1(M)$,
\item[(iii)] $\frac{1}{f}\in L^p(M)$ for some $p>1$,
\item[(iv)] $M$ be of linear volume growth,
\end{enumerate}
holds, then $M$ must be steady.
\end{thm}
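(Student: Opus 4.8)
The plan is to use the trace identity for gradient traceless Ricci solitons to show that the potential function is superharmonic, and then to run a Liouville-type argument against each of the four hypotheses in turn.

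First I would record that specializing equation \eqref{eq3} of Lemma \ref{lemma1} to $\rho=\frac{1}{n}$ gives $\Delta f=(n\rho-1)R+n\lambda=n\lambda$. Since the soliton is non-shrinking we have $\lambda\le 0$, so $f$ is superharmonic, and by hypothesis $f\ge k_1>0$. Under (i), $f$ is then a positive superharmonic function on a parabolic manifold, hence constant, so $\Delta f=0$ and $\lambda=0$. Under (ii), set $X=\nabla f$, so that ${\rm div}_MX=\Delta f=n\lambda$ has constant sign; since $|X|=|\nabla f|\in L^1(M)$, Lemma \ref{lemma Caminha} gives ${\rm div}_MX=0$ (the compact case being handled by integrating $\Delta f=n\lambda$ over $M$), hence $\lambda=0$.

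For (iii) and (iv) the common device is the function $u:=\frac{1}{f}$, which is positive and bounded above by $\frac{1}{k_1}$. From $\nabla u=-f^{-2}\nabla f$ one computes
\[
\Delta u=-\frac{\Delta f}{f^{2}}+\frac{2|\nabla f|^{2}}{f^{3}}=-\frac{n\lambda}{f^{2}}+\frac{2|\nabla f|^{2}}{f^{3}}\ge 0,
\]
so $u$ is a non-negative subharmonic function. Under (iii), $u\in L^p(M)$ for some $p>1$, so Lemma \ref{lema Yau} forces $u$, and hence $f$, to be constant, giving $\lambda=0$. Under (iv), Lemma \ref{a10} applied to $u$ yields $\int_{B(q,r)}|\nabla u|^{2}\le\frac{C}{r^{2}}\int_{B(q,2r)}u^{2}\le\frac{C}{k_1^{2}r^{2}}\operatorname{Vol}(B(q,2r))$, and linear volume growth makes the right-hand side $O(1/r)\to 0$ as $r\to\infty$; thus $\int_M|\nabla u|^2=0$, so $f$ is constant and again $\lambda=0$. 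In all four cases the soliton is steady.

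The computations are short, and I expect the only delicate point to be fixing the correct sign in $\Delta(1/f)$ — this is precisely where non-shrinking ($\lambda\le 0$) and the positivity of $f$ both get used — together with matching each hypothesis to the appropriate statement among Lemmas \ref{lemma Caminha}, \ref{lema Yau}, and \ref{a10} and attending to the (implicit) completeness and, for (ii), non-compactness/orientability assumptions; once the identity $\Delta f=n\lambda$ is in hand there is no real obstacle.
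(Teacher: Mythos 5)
Your proposal is correct and follows essentially the same route as the paper: derive $\Delta f=n\lambda\le 0$ from the trace identity, use parabolicity for (i), Caminha's lemma for (ii), and pass to the subharmonic function $1/f$ for (iii) and (iv) via Yau's $L^p$ Liouville theorem and the Schoen--Yau gradient inequality combined with linear volume growth, respectively. Your added remark handling the compact case in (ii) by integrating $\Delta f=n\lambda$ is a small refinement the paper leaves implicit, but the argument is otherwise identical.
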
 
\begin{proof}
 Taking the trace of equation $\eqref{ro soliton}$ we get
 \begin{equation}\label{a8}
 R+\Delta f=\lambda n+\rho Rn.
 \end{equation}
 On a gradient traceless Ricci soliton $\rho=\frac{1}{n}$ and hence from (\ref{a8}) it follows that
 \begin{equation}\label{a9}
 \Delta f = \lambda n\leq0,
  \end{equation}
i.e., $f$ is superharmonic. So, if (i) holds  we obtain that $f$ is constant, and consequently from \eqref{a9} we have $\lambda=0$. The case (ii) follows from Lemma \ref{lemma Caminha} and equation \eqref{a9}.

Now we observe that
 \begin{equation*}
  \Big(\frac{1}{f}\Big)_j = -\Big(\frac{1}{f^2}\Big)f_j,
 \end{equation*}
 and
 \begin{equation*}
  \Big(\frac{1}{f}\Big)_{jj} = \Big(\frac{2}{f^3}\Big)f^2_j -\Big(\frac{1}{f^2}\Big)f_{jj}.
 \end{equation*}
 Hence
 \begin{equation}\label{laplac}
 \Delta \Big(\frac{1}{f}\Big)=\Big(\frac{2}{f^3}\Big) |\nabla f|^2 -\frac{\Delta f}{f^2}.
 \end{equation}
 Since $\Delta f\leq 0$, it follows that $\Delta (\frac{1}{f})\geq 0$.  So, if (iii) holds  we have from Lemma \ref{lema Yau} that $\frac{1}{f}$ is constant on $M$, thus $f$ is constant and from \eqref{a9} entail $\lambda=0$.

Finally, suppose (iv) holds. Then from equation \eqref{a9} we have $\Delta f $ $\leq 0$, thus from \eqref{laplac} it follows that $\Delta (\frac{1}{f})\geq 0$. Since $M$ is of linear volume growth, i.e., $V(B(q,r))\leq C_2r$, for some constant $C_2>0$, we obtain from Lemma \ref{a10} that
 \begin{eqnarray*}
 \int_{B(q,r)}|\nabla \frac{1}{f}|^2&&\leq \Big(\frac{C}{r^2}\Big)\int_{B(q,2r)}\Big(\frac{1}{f^2}\Big)\\
  && \leq \Big(\frac{C}{r^2k_1^2}\Big) V(B(q,2r)) \\
  &&\leq \Big(\frac{C}{r^2k_1^2}\Big)C_2 2r\\
 &&\leq \frac{2CC_2}{rk_1^2} \rightarrow 0  
 \end{eqnarray*}
 as $r\rightarrow \infty$. Therefore 
 \begin{equation}
  \int_{M}|\nabla \frac{1}{f}|^2=0,
 \end{equation}
 which follows that the function $\frac{1}{f}$ is constant, that is $f$ is constant. Therefore $\Delta f =0$. Thus from (\ref{a9}) we have $\lambda=0$. This completes the proof. 
\end{proof}

\begin{thm}
Let $(M,g,f, \frac{1}{n})$ be a non-expanding gradient traceless Ricci soliton with non-negative potential function $f$. If $f\in L^p(M)$ for some $p>1$, then $M$ must be steady.
\end{thm}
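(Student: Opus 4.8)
The plan is to mimic the structure used for the preceding theorem but to exploit the sign reversal coming from the non-expanding hypothesis. Taking the trace of the structural equation \eqref{ro soliton} with $\rho=\frac1n$ gives, as in \eqref{a9}, that $\Delta f=\lambda n$; since now $\lambda\le 0$ we again have $\Delta f\le 0$, so $f$ is a non-negative superharmonic function on $M$. Actually, for this theorem the cleaner route is to work with $f$ itself rather than with $1/f$, because $\Delta f\le 0$ already says $-f$ is subharmonic, but $-f$ is non-positive, so instead I would observe that $f$ being superharmonic and non-negative is exactly the hypothesis needed to invoke a Liouville-type statement once we also know $f\in L^p(M)$.

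The key step is to pass from superharmonicity plus $L^p$-integrability to the conclusion that $f$ is constant. The direct tool is Lemma \ref{lema Yau} applied in the form for superharmonic functions: if $u\ge 0$ is superharmonic and $u\in L^p(M)$ for some $p>1$ on a complete manifold, then $u$ is constant. (If one prefers to cite Lemma \ref{lema Yau} verbatim for \emph{subharmonic} functions, note that completeness together with $f\ge 0$, $\Delta f\le 0$, and $f\in L^p$ forces $f$ to be bounded and then one can run the Caccioppoli/cut-off argument of Lemma \ref{a10} on $f$ directly, or simply invoke that a non-negative $L^p$ superharmonic function on a complete manifold is constant, which is the superharmonic counterpart of Yau's theorem and is standard.) Once $f$ is constant, $\Delta f=0$, and then \eqref{a9} gives $\lambda n=0$, i.e. $\lambda=0$, so the soliton is steady. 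This is exactly the pattern of cases (ii)–(iv) of the previous theorem, now carried out in one stroke.

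The main obstacle — really the only subtle point — is justifying the Liouville step in the superharmonic, rather than subharmonic, setting, since the lemmas quoted in the paper (Lemma \ref{lema Yau}, Lemma \ref{a10}) are phrased for subharmonic functions. I expect the intended argument is to combine $f\ge 0$ with $\Delta f\le 0$ to first deduce that $f$ is bounded above (a non-negative superharmonic function with an $L^p$ bound cannot grow), and then to apply Lemma \ref{a10} to the bounded subharmonic function $C-f$ for a suitable constant $C\ge \sup_M f$, obtaining $\int_{B(q,r)}|\nabla f|^2\le \frac{C'}{r^2}\int_{B(q,2r)}(C-f)^2$; controlling the right-hand side as $r\to\infty$ using $f\in L^p$ (and boundedness, to pass between $L^p$ and $L^2$ locally) forces $\nabla f\equiv 0$. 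I would present the proof by reducing to this Caccioppoli estimate, remarking that it is the exact analogue of the argument already given for case (iv) of the previous theorem with $1/f$ replaced by $f$, and then concluding $\lambda=0$ from \eqref{a9}.
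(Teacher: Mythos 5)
There is a genuine error at the very first step: you have reversed the paper's sign convention. In this paper a soliton is \emph{expanding} when $\lambda<0$ (and shrinking when $\lambda>0$), so \emph{non-expanding} means $\lambda\geq 0$, not $\lambda\leq 0$. Consequently \eqref{a9} gives $\Delta f=\lambda n\geq 0$, i.e.\ $f$ is a non-negative \emph{subharmonic} function, and since $f\in L^p(M)$ for some $p>1$, Lemma \ref{lema Yau} applies verbatim: $f$ is constant, hence $0=\Delta f=\lambda n$ and $\lambda=0$. That three-line argument is exactly the paper's proof; the whole point of stating this theorem for the non-expanding case (as opposed to the non-shrinking case of the preceding theorem, where $\lambda\leq 0$ makes $f$ superharmonic and forces the detour through $1/f$) is that here Yau's lemma applies directly to $f$.

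Because of the sign slip, your proposal is built around a superharmonic Liouville step that you do not actually justify. The ``superharmonic counterpart of Yau's theorem'' you invoke is not among the quoted lemmas and is not standard in this generality (Yau's $L^p$, $p>1$, Liouville theorem is genuinely a statement about subharmonic functions; its naive superharmonic analogue requires additional hypotheses such as curvature or volume conditions). Your fallback also does not close: even granting that $f$ is bounded, applying Lemma \ref{a10} to $C-f$ yields
\begin{equation*}
\int_{B(q,r)}|\nabla f|^2\leq \frac{C'}{r^2}\int_{B(q,2r)}(C-f)^2\leq \frac{C'C^2}{r^2}\,V(B(q,2r)),
\end{equation*}
and without a volume growth assumption (which is not part of this theorem's hypotheses) the right-hand side need not tend to $0$. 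So the proposed route both misidentifies the sign of $\Delta f$ and, on its own terms, leaves the key Liouville step unproved. Correcting the convention reduces the proof to a direct application of Lemma \ref{lema Yau}.
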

\begin{proof}
Since for non-expanding solitons $\lambda\geq 0$, it follows from equation (\ref{a9}) that $$\Delta f=\lambda n\geq 0,$$ 
i.e., $f$ is a non-negative subharmonic function. Hence from Lemma \ref{lema Yau} we get $f$ constant, and $0=\Delta f=\lambda n\geq0$. Therefore $\lambda=0$.
\end{proof}

Finally we consider a non-expanding gradient traceless Ricci  soliton and proved an inequality analogous to Lemma \ref{a10} for its potential function.

\begin{thm}
If $(M,g,f, \frac{1}{n})$ be a non-expanding gradient traceless Ricci soliton with non-negative potential function $f$, then $f$ satisfies the following integral inequality
\begin{equation*}
\int_{B(q,r)}|\nabla f|^2 \leq\frac{C}{r^2}\int_{B(q,2r)}f^2,
\end{equation*}
 where $B(q,r)$ is a ball with radius $r>0$ and center at $q$ and C is a real constant.
\end{thm}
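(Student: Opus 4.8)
The plan is to reduce the asserted inequality directly to Lemma~\ref{a10}, once we check that $f$ is a non-negative subharmonic function on $M$. First I would take the trace of the structural equation \eqref{ro soliton}: since $\rho=\frac{1}{n}$, the term $\rho R n$ cancels $R$, so one is left with $\Delta f = \lambda n$, which is exactly equation \eqref{a9}. Because the soliton is non-expanding we have $\lambda \geq 0$, and therefore $\Delta f = \lambda n \geq 0$; that is, $f$ is subharmonic on all of $M$.

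With subharmonicity in hand, the hypothesis that $f$ is non-negative puts us precisely in the setting of Lemma~\ref{a10}: for every $q \in M$ and every $r>0$, the function $f$ is a non-negative subharmonic function on the ball $B(q,2r)$, so the Schoen--Yau mean value inequality yields
$$\int_{B(q,r)}|\nabla f|^2 \leq \frac{C}{r^2}\int_{B(q,2r)} f^2,$$
with $C$ the constant furnished by Lemma~\ref{a10}. This is the claimed estimate, and the proof is complete.

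Since the argument is an immediate consequence of a result already quoted in the preliminaries, there is essentially no obstacle. The only points requiring attention are bookkeeping ones: that the sign convention for ``non-expanding'' ($\lambda \geq 0$) is the one under which \eqref{a9} was derived, so that we genuinely obtain $\Delta f \geq 0$ rather than $\Delta f \leq 0$; and that completeness of $M$ (implicit in the notion of soliton used here) makes the geodesic balls $B(q,2r)$ available for every $r>0$. Note that, unlike the preceding two theorems, no global growth or $L^p$-integrability assumption on $f$ is needed, because we are only asserting the local mean value inequality and not the constancy of $f$.
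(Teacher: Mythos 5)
Your argument is correct and coincides with the paper's own proof: both derive $\Delta f=\lambda n\geq 0$ from the traced structural equation \eqref{a9} together with $\lambda\geq 0$, conclude that $f$ is a non-negative subharmonic function, and then invoke Lemma~\ref{a10} to obtain the integral inequality. Your additional remarks on completeness and the sign convention are sensible but do not change the substance of the argument.
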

\begin{proof}
Since for non-expanding solitons we have $\lambda\geq 0$, it follows from equation (\ref{a9}) that $$\Delta f\geq 0.$$ Thus $f$ is a non-negative subharmonic function. Therefore from lemma \ref{a10} we get our result.
\end{proof}
\section{acknowledgment}
 The second author is partially supported by CNPq, Brazil, grant 430998/2018-0 and FAPEPI (Edital 007-2018) and the third author gratefully acknowledges to the
 CSIR(File No.:09/025(0282)/2019-EMR-I), Govt. of India for financial assistance.

\end{document}